\theoremstyle{plain}
\newtheorem{thm}[equation]{Theorem}
\newtheorem{lem}[equation]{Lemma}
\newtheorem{cor}[equation]{Corollary}
\newtheorem{prop}[equation]{Proposition}
\newtheorem{hyp}[equation]{Hypothesis}
\theoremstyle{definition}
\newtheorem{disc}[equation]{Comment}
\newtheorem{question}[equation]{Question}
\numberwithin{equation}{section}
\newcommand{\isoarrow}{{~\overset\sim\longrightarrow}}
\newcommand{\CO}{{\mathcal{O}}}
\newcommand{\ZZ}{{\mathbb Z}}
\newcommand{\ra}{{~\rightarrow~}}
\newcommand{\QQ}{{\mathbb Q}}
\newcommand{\Ql}{{\mathbb Q}_{\ell}}
\newcommand{\RR}{{\mathbb R}}
\newcommand{\af}{{{\bold A}_f}}
\newcommand{\CC}{{\mathbb C}}
\newcommand{\Qp}{{{\mathbb Q}_p}}
\begin{document}

\title [Weight zero Eisenstein cohomology via Berkovich spaces]
{Weight zero Eisenstein cohomology of Shimura varieties via Berkovich spaces}

\author{Michael Harris}
\address{Institut de Math\'ematiques de
Jussieu, U.M.R. 7586 du CNRS; UFR de Math\'ematiques \\ Universit\'e Paris-Diderot Paris 7}

\thanks{The research leading to these results has received funding from the European Research Council under the European Community's Seventh Framework Programme (FP7/2007-2013) / ERC Grant agreement n¡ 290766 (AAMOT)}

\dedicatory{in memory of Jon Rogawski}
\smallskip
\keywords{Shimura varieties, Eisenstein cohomology, Berkovich spaces}

\subjclass[2000]{}

\date{\today}
\thanks{ }

\maketitle

\section*{Introduction}

This paper represents a first attempt to understand a geometric structure
that plays an essential role in my forthcoming paper with Lan, Taylor, and Thorne \cite{HLTT}
on the construction of certain Galois representations by $p$-adic interpolation
between Eisenstein cohomology classes and cuspidal cohomology.  The classes
arise from the cohomology of a locally symmetric space $Z$ without complex structure --
specifically, the adelic locally symmetric space attached to $GL(n)$ over a CM field $F$.
It has long been known, thanks especially to the work of Harder and Schwermer
(cf. \cite{H}, for example) that classes of this type often give rise to non-trivial Eisenstein
cohomology of a Shimura variety $S$; in the case of  $GL(n)$ as above, $S$ is attached to
the unitary similitude group of a maximally isotropic hermitian space of dimension $2n$
over $F$.   This is the starting point of the connection with
Galois representations.  The complete history of this idea will be explained in \cite{HLTT}; in this
note I just want to explore a different perspective on the construction of these classes.

By duality, the Eisenstein classes of Harder and Schwermer correspond to classes 
in cohomology with compact support, and it turns out to be more fruitful to look
at them in this way.  One of Taylor's crucial observations was that certain of these
classes are of weight zero and can therefore be constructed geometrically in any
cohomology theory with a good weight filtration, in particular in rigid cohomology, which
lends itself to $p$-adic interpolation.   The geometric construction involves the abstract simplicial
complex $\Sigma$ defined by the configuration of boundary divisors of a toroidal compactification;
this complex, which is homotopy equivalent to the original locally symmetric space $Z$,
 arises in the calculation of the weight filtration on the cohomology of the
logarithmic de Rham complex.  Under certain conditions (as I was reminded by
Wiesia Nizio\l, commenting on the construction in \cite{HLTT}\footnote{Since writing the first
version of this article I have learned that Laurent Fargues had essentially the same idea independently.}) 
Berkovich has defined an 
isomorphism between the weight zero cohomology with compact support of a scheme and
the compactly supported cohomology of the associated Berkovich analytic space, which 
is a topological space.  His results apply to both $\ell$-adic and $p$-adic \'etale cohomology
as well as to Hodge theory.  In this paper we apply this isomorphism to the toroidal compactification $S'$
of a Shimura variety $S$.    Both $S$ and $S'$ are defined over some number field $E$; we fix
a place $v$ of $E$ dividing the rational prime $p$ and let $|S|$ and $|S'|$ be the associated analytic spaces over $E_v$ 
in the sense of Berkovich.   We observe that $\Sigma$ is homotopy equivalent to  $|S'| \setminus |S|$
\footnote{Our Shimura varieties are attached to groups of rational rank $1$, whose toroidal boundary 
is the blowup of point boundary components in the minimal compactification.  More general Shimura
varieties are compactified by adding strata attached to different conjugacy classes of maximal
parabolics, and then $|S'| \setminus |S|$ has several strata as well.}   Moreover, when $S$ 
and $S'$ both have good reduction at $v$, $|S|$ and $|S'|$ 
are both contractible \cite{B1}, and it follows easily that the cohomology of $\Sigma$ maps to $H^*_c(|S'|)$ in
the theories considered in \cite{B2}.  

These ideas will be worked out systematically in forthcoming work.
The present note explains the construction in the simplest situation.  We only consider
cohomology with trivial coefficients of Shimura varieties with a single class of rational boundary
components, assumed to be of dimension $0$.   We work with connected rather than
adelic Shimura varieties and write the boundary as a union of connected quotients
of a (non-hermitian) symmetric space by discrete subgroups.  We also only work at places
of good reduction, in order to quote Berkovich's theorems directly.  In \cite{HLTT} it is crucial to 
at arbitrary level, but the relevant target spaces are the ordinary loci of Shimura varieties.
Perhaps Berkovich's methods apply to these spaces as well, but for the moment this cannot
be used, since the results of \cite{B2} have not been verified for rigid cohomology.

Berkovich gives a topological interpretation in \cite{B2} of the weight zero stage of the Hodge
filtration, but it can also be used as a topological definition of this part of the cohomology.
Since the cohomology of $\Sigma$ has a natural integral structure, it's conceivable that
the results of Berkovich provide some information about torsion in cohomology.  This is one of
the main motivations for reconsidering the construction of \cite{HLTT} in the light of Berkovich's theory.

I also thank my coauthors Kai-Wen Lan, Richard 
Taylor, and Jack Thorne, for providing the occasion for the present paper;
 Wiesia Nizio{\l}, for pointing out the connection with Berkovich's work; and Sam Payne, for
explaining the results of Berkovich and Thuillier.   

Jon Rogawski was exceptionally generous in person.
Although we met rarely,  on at least two separate occasions he took the time
to help me find my way around technical problems central to the
success of my work.  He will be greatly missed.

\section{The construction}

The standard terminology and notation for Shimura varieties will be used without
explanation.   Let $(G,X)$ be the datum defining a Shimura variety $S(G,X)$, with $G$ a connected reductive
group over $\QQ$ and $X$ a union of copies of the hermitian symmetric space attached to
the identity component of $G(\RR)$.  Let $D$ be one of these components and let $\Gamma \subset G(\QQ)$
be a congruence subgroup; then $S = \Gamma \backslash D$ is a connected component of $S(G,X)$ at
some finite level $K$; here $K$ is an open compact subgroup of $G(\af)$.  Then $S$ has a canonical model over some number field $E = E(D,\Gamma)$.  We assume $\Gamma$ is neat; then $S$ is smooth and has a family
of smooth projective toroidal compactifications, as in \cite{AMRT}.   We make a series of simplifying hypotheses.

\begin{hyp}\label{rank}  The group $G$ has rational rank $1$.  Let $P$ be a rational parabolic subgroup of $G$ (unique
up to conjugacy); then $P$ is the stabilizer of a point boundary component of $D$.
\end{hyp}

It follows from the general theory in \cite{AMRT}
that if  $S' \supset S$ is a toroidal compactification then the complement $S' \setminus S$ is a
union of rational divisors.  We pick such an $S'$, assumed smooth and projective, and assume that 
$S' \setminus S$ is a divisor with normal crossings.  Let $v$ be
a place of $E$ dividing the rational prime $p$. 

\begin{hyp}\label{gr}  The varieties $S$ and $S'$ have smooth projective models $\frak{S}$ and $\frak{S'}$
over the $v$-adic integer ring $Spec(\CO_v)$.
\end{hyp}

This is proved by Lan for PEL type Shimura varieties at hyperspecial level in several long papers,
starting with \cite{L}.  

Let $\bar{\frak{S}}$ denote the base change of $\frak{S}$ to the algebraic closure of $F_v$,
$\bar{\frak{S}}^{an}$ the associated Berkovich analytic space, and $|S|$ (rather than
$|\bar{\frak{S}}^{an}|$) the underlying topological space.  We use the same notation for $S'$.  
Let $Z = |S'| \setminus |S|$.

\begin{lem}  The spaces $|S|$ and $|S'|$ are contractible and $|S'|$ is compact.  In particular, 
\begin{itemize}
\item[(a)]  The inclusion $|S| \hookrightarrow |S'|$ is a homotopy equivalence;
\item[(b)]  There are canonical isomorphisms $H^i_c(|S|,A) \isoarrow H^i(|S'|,Z;A)$
for any ring $A$;
\item[(c)]  The connecting homomorphism $H^i(Z,A) \rightarrow H^{i+1}(|S'|,Z;A) = H^{i+1}_c(|S'|,A) $ is an isomorphism
for $i > 0$.
\end{itemize}
\end{lem}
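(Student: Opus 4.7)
The plan is to first establish the two preliminary assertions in the opening sentence---compactness of $|S'|$ and contractibility of $|S|$ and $|S'|$---since once these are in hand parts (a), (b), (c) follow by essentially formal topological arguments. Compactness of $|S'|$ is automatic: the Berkovich analytification of a scheme proper over a complete non-archimedean field is compact Hausdorff. For contractibility I would invoke Berkovich's theorem from \cite{B1}: under Hypothesis~\ref{gr}, the $v$-adic formal completions of $\frak{S}$ and $\frak{S'}$ are smooth formal $\CO_v$-schemes, and in the smooth case with geometrically irreducible special fiber their analytic generic fibers strongly deformation-retract onto a single point (the unique Shilov boundary point of the formal model).

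Part (a) is then immediate, since any map between contractible spaces is a homotopy equivalence. For (b), the open immersion of schemes $S \hookrightarrow S'$ analytifies to an open embedding $|S| \hookrightarrow |S'|$, whose complement $Z$ is closed, being the analytification of the boundary divisor $S' \setminus S$. With $|S'|$ compact Hausdorff and $Z$ closed, the standard identification $H^i_c(U, A) \cong H^i(X, X \setminus U; A)$ for $U$ open in a compact Hausdorff space $X$ gives the canonical isomorphism in (b).

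For (c), I would apply the long exact cohomology sequence of the pair $(|S'|, Z)$,
\[
\cdots \to H^i(|S'|, A) \to H^i(Z, A) \to H^{i+1}(|S'|, Z; A) \to H^{i+1}(|S'|, A) \to \cdots,
\]
and use contractibility of $|S'|$ to kill both $H^i(|S'|, A)$ and $H^{i+1}(|S'|, A)$ whenever $i > 0$. The connecting map in the middle is then forced to be an isomorphism in that range, and by part (b) its target may be identified with $H^{i+1}_c(|S|, A)$.

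The main obstacle is the contractibility of $|S|$: because $S$ is not proper over $E$, this does not quite reduce to the classical good-reduction case for proper schemes, and one needs the formal-scheme version of Berkovich's deformation-retraction theorem applied to a smooth-but-non-proper formal model. Once \cite{B1} is invoked in that generality (which is the version the lemma implicitly uses), the rest of the proof is purely an application of standard pair/excision sequences in sheaf (or singular / Alexander--Spanier) cohomology of locally compact Hausdorff spaces.
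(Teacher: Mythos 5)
Your proposal follows essentially the same route as the paper: contractibility of $|S|$ and $|S'|$ is quoted from Berkovich \cite{B1} under Hypothesis \ref{gr}, compactness of $|S'|$ comes from properness of $S'$, and (a)--(c) are exactly the formal consequences the paper draws via the long exact sequence of the pair $(|S'|,Z)$; your reading of the target in (c) as $H^{i+1}_c(|S|,A)$ rather than $H^{i+1}_c(|S'|,A)$ is also the intended one, as the use made in Corollary \ref{inj} confirms. One caveat on the step you yourself single out as the main obstacle: for the non-proper $S$, the space $|S|$ is the analytification of the generic fibre, not the Berkovich generic fibre of the $v$-adic formal completion of $\frak{S}$; the retraction of the latter onto its Shilov point only yields contractibility of the tube of the special fibre, which is a proper subspace of $|S|$, so ``applying the formal-scheme version to a smooth-but-non-proper formal model'' does not by itself settle $|S|$. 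What is actually needed, and what the paper cites, is Berkovich's assertion (stated explicitly in the introduction of \cite{B1}, resting on the results of \S 5 there) that the analytification of a scheme with good reduction, i.e.\ admitting a smooth model over $\CO_v$, is contractible; with that reference substituted for your Shilov-point argument, the rest of your proof is complete and coincides with the paper's.
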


\begin{proof}  Contractibility of $|S|$ and $|S'|$ follows from \ref{gr} by the results of \S 5 of \cite{B1} (though the contractibility
of analytifications of spaces with good reduction seems only to be stated explicitly in the introduction).  
Since $S'$ is proper, $|S'|$ is compact.  Then (a) and (b) are clear and (c) follows from the long exact sequence for
cohomology
$$\dots H^i(|S'|,A) \ra H^i(Z,A) \rightarrow H^{i+1}(|S'|,Z;A) \ra H^{i+1}(|S'|,A) \dots $$
\end{proof}

Let $P = LU$ be a Levi decomposition, with $L$ reductive and $U$
unipotent, let $L^0$ denote the identity component of the Lie group $L(\RR)$,
and let $D_P$ denote the symmetric space attached to $L^0$ (or to its derived subgroup $(L^0)^{der}$).
The {\it minimal compactification} (or Satake compactification) $S^*$ of $S$ is a projective algebraic
variety obtained by adding a finite set of points, say $N$ points, which we can call ``cusps," to $S$.  The {\it toroidal compactifications} $S^{tor}$
of \cite{AMRT}, which depend on combinatorial data, are constructed by blowing up the cusps;  each one is replaced by a configuration of rational divisors, to which we return momentarily.  For appropriate choices of data $S^{tor}$ is a smooth
projective variety and $\partial S^{tor} = S^{tor} \setminus S$ is a divisor with normal crossings; $\partial S^{tor}$ is a
union of $N$ connected components, one for each cusp.
The {\it reductive Borel-Serre compactification} $S^{rs}$ of $S$ is a compact (non-algebraic)
manifold with corners (boundary in this case) containing $S$ as dense open subset, and such that
\begin{equation}  S^{rs} \setminus S = \coprod_{j = 1}^N \Delta_j\backslash D_P \end{equation}
where for each $j$ $\Delta_j$ is a cocompact congruence subgroup of $L(\QQ)$.  

Details on $S^{rs}$ can be found in a number of places, for example  \cite{BJ}.  We introduce
this space only in order to provide an independent description of $Z$.  Roughly speaking, $Z$ is canonically
homotopy equivalent to $S^{rs} \setminus S$.   More precisely, let $S \hookrightarrow S^{tor}$ be
a toroidal compactification as above consider the {\it incidence complex} $\Sigma$ of the divisor with normal crossings
$\partial S^{tor}$.  This is a simplicial complex whose vertices are the irreducible components $\partial_i$ of $\partial S^{tor}$,
whose edges are the non-trivial intersections $\partial_i \cap \partial_j$, and so on.  

\begin{prop}\label{triang}  The incidence complex $\Sigma$ is homeomorphic to a triangulation 
of $S^{rs} \setminus S$.
\end{prop}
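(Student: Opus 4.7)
The plan is to work one cusp at a time. Both sides decompose as disjoint unions indexed by the $N$ cusps: write $\Sigma = \coprod_j \Sigma_j$, where $\Sigma_j$ is the incidence complex of the component $\partial_j S^{tor}$ of the toroidal boundary lying over the $j$-th cusp, and recall $S^{rs} \setminus S = \coprod_j \Delta_j \backslash D_P$. It therefore suffices to construct, for each $j$, a triangulation of $\Delta_j \backslash D_P$ whose underlying abstract simplicial complex is $\Sigma_j$.

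Fix $j$, with associated rational parabolic $P = LU$ and centre $U_0 \subset U$. By the Ash--Mumford--Rapoport--Tai construction of \cite{AMRT}, the toroidal boundary above this cusp is determined by a $\Delta_j$-admissible rational polyhedral cone decomposition $\mathcal{F}_j$ of the self-adjoint homogeneous cone $C_j \subset U_0(\RR)$ attached to $P$. Under Hypothesis \ref{rank}, $C_j$ has no proper rational boundary components, so every cone of $\mathcal{F}_j$ sits inside $\bar C_j$. The standard dictionary of toroidal geometry then identifies the irreducible components of $\partial_j S^{tor}$ with $\Delta_j$-orbits of rays in $\mathcal{F}_j$, non-empty $k$-fold intersections with $\Delta_j$-orbits of $k$-dimensional cones, and matches the face relation among cones with the incidence of boundary strata. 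Hence $\Sigma_j$ coincides, as an abstract simplicial complex, with the quotient by $\Delta_j$ of the projective link of $\mathcal{F}_j$ (its cross-section modulo the scaling action of $\RR^+$).

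To realise this quotient as a triangulation of $\Delta_j \backslash D_P$, one uses that $L^0$ acts transitively on $C_j$ with maximal compact stabiliser $K_L$. Quotienting by the central split $\RR^+$ of $L^0$ (equivalently, passing to the derived subgroup) gives a canonical $L^0$-equivariant homeomorphism $C_j / \RR^+ \isoarrow D_P$, compatible with the $\Delta_j$-action via $L(\QQ)$. The projectivised cone complex $\mathcal{F}_j / \RR^+$ thereby equips $D_P$ with a $\Delta_j$-invariant simplicial subdivision, and the $\Delta_j$-quotient is the desired triangulation, with underlying simplicial complex equal to $\Sigma_j$. Assembling over $j = 1, \ldots, N$ yields the global homeomorphism.

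The principal point to verify is that the passage to the $\Delta_j$-quotient produces a bona fide triangulation rather than a $\Delta$-complex with self-identifications; equivalently, that no cone of $\mathcal{F}_j$ is stabilised set-wise (modulo its origin) by a non-trivial element of $\Delta_j$. This is precisely the condition guaranteed by neatness of $\Gamma$ that is already used, in the AMRT construction, to ensure that $S^{tor}$ is smooth and that $\partial S^{tor}$ is a divisor with simple normal crossings. If necessary one may replace $\mathcal{F}_j$ by a $\Delta_j$-admissible subdivision, which corresponds to a blow-up of $S^{tor}$ and only refines $\Sigma_j$ simplicially without altering the underlying topological space $\Delta_j \backslash D_P$; so the statement of the proposition is unaffected.
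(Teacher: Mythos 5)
Your route is a direct Ash--Mumford--Rapoport--Tai argument (cusp-by-cusp reduction, the dictionary between boundary strata and $\Delta_j$-orbits of cones, and the identification $D_P \cong C_j/\RR^+$), whereas the paper simply quotes \cite{HZ}, Corollary 2.2.10 --- that $\Sigma$ triangulates a compact deformation retract of $S^{rs}\setminus S$ --- and observes that under Hypothesis \ref{rank} this space is already compact; so you are in effect reproving the content of the cited corollary in the rank-one case. The skeleton is the right one, and your use of Hypothesis \ref{rank} is in the right place, though what it actually buys is not that the cones ``sit inside $\overline{C}_j$'' (vacuous) but that the only rational boundary component of $C_j$ is the origin, so every nonzero cone of $\mathcal{F}_j$ has relative interior in the open cone; that is what makes the projectivised fan cover all of $D_P$, makes the quotient compact, and (since interior rational rays have compact point stabilisers) makes setwise cone stabilisers finite, hence trivial for torsion-free $\Delta_j$.

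The genuine gap is at the step you yourself flag as ``the principal point to verify,'' which you then dispose of incorrectly. Neatness of $\Gamma$ is not what makes the $\Delta_j$-quotient of the projectivised fan an honest simplicial complex agreeing with $\Sigma$ as defined in the paper (the nerve: one simplex for each non-empty multiple intersection of divisors). What is needed is a fineness condition on $\mathcal{F}_j$ relative to $\Delta_j$: no two faces of a single cone $\Delta_j$-equivalent, distinct cone orbits determined by their sets of ray orbits, and the multiple intersections of boundary components connected, so that the nerve coincides with the dual complex of strata. These are conditions on the fan, not on $\Gamma$: for a Hilbert modular cusp (rank one, point cusps) with unit group $\langle\epsilon\rangle$ acting on the quadrant, fans with one or two ray orbits produce a boundary cycle consisting of one nodal curve, resp.\ two curves meeting in two points; the divisor has normal crossings (even simple normal crossings in the second case), yet the nerve is a point, resp.\ an interval, while $\Delta_j\backslash D_P$ is a circle. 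So the assertion that neatness ``is precisely the condition'' guaranteeing the required property is not a proof, and the connectedness-of-intersections issue is never addressed at all. Your fallback of subdividing $\mathcal{F}_j$ does not repair this: it replaces $S'$, hence $\Sigma$, while the proposition is asserted for the compactification fixed earlier in the paper. What the subdivision remark does show is that the statement holds once the cone decomposition is chosen fine enough --- an assumption that is implicit in the paper and in the conventions of \cite{HZ} --- so to complete your argument you should either impose and use that fineness condition explicitly, or verify that it follows from the specific choices (as in \cite{L}) made to ensure Hypothesis \ref{gr}.
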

\begin{proof}  In \cite{HZ}, Corollary 2.2.10, it is proved that $\Sigma$ is a triangulation
of a compact deformation retract of $S^{rs} \setminus S$; but under \ref{rank} $S^{rs} \setminus S$
is already compact.  In any case, $\Sigma$ and $S^{rs} \setminus S$ are homotopy equivalent.
\end{proof}

\begin{thm}\label{defret} (Berkovich and Thuillier)  There is a canonical deformation retraction of $Z = S'  \setminus S$
onto $\Sigma$.
\end{thm}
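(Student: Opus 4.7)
The plan is to apply the canonical skeleton construction of Berkovich (\cite{B1}, \S5), extended to the toroidal / log-smooth setting by Thuillier, to the divisor with normal crossings $\partial\frak{S}' := \frak{S}'\setminus\frak{S}$ inside the smooth integral model $\frak{S}'$. By Hypothesis \ref{gr} and the assumption that $\partial\frak{S}'$ is a relative normal crossings divisor, the pair $(\frak{S}',\partial\frak{S}')$ is \'etale locally isomorphic to $(\BA^n_{\CO_v},\{z_1\cdots z_k=0\})$; the formal completion $\widehat{\frak{S}'}$ of $\frak{S}'$ along $\partial\frak{S}'$ is therefore a strictly poly-stable formal $\CO_v$-scheme in Berkovich's sense.

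To such a formal scheme Berkovich and Thuillier attach a canonical closed subset $\mathrm{Sk}(\widehat{\frak{S}'})\sub\widehat{\frak{S}'}^\eta$, the skeleton, together with a canonical strong deformation retraction of $\widehat{\frak{S}'}^\eta$ onto $\mathrm{Sk}(\widehat{\frak{S}'})$. In the standard toric local model this skeleton is the simplex of monomial valuations $|z^\alpha|=e^{-\sum\alpha_i a_i}$ with $(a_1,\ldots,a_k)$ in the standard $(k-1)$-simplex, and the retraction sends a point $x$ to the monomial valuation with the same values $|z_i|_x$. Globally this identifies $\mathrm{Sk}(\widehat{\frak{S}'})$ with the incidence complex $\Sigma$ of Proposition \ref{triang}: $k$-simplices correspond to irreducible components of $(k+1)$-fold intersections of the components of $\partial\frak{S}'$.

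It remains to transfer the retraction from $\widehat{\frak{S}'}^\eta$, the open tube of $\partial\frak{S}'$ in $|S'|$, to the closed subspace $Z=|S'|\setminus|S|$. Since $Z$ is the analytification of the closed subscheme $\partial S'$, we have $Z\sub\widehat{\frak{S}'}^\eta$. Because the Thuillier retraction acts monomially in the divisor coordinates $z_i$, it preserves each vanishing locus $\{|z_i|=0\}$ and hence carries $Z$ into itself throughout the homotopy. Restricting then yields the desired canonical deformation retraction of $Z$ onto $\Sigma$. The principal obstacle is precisely this last compatibility: one must verify that Thuillier's retraction, canonical on the open tube, is stratum-preserving so that it restricts to the closed analytification $Z$ of the boundary divisor. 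This follows from the explicit toric local model, but requires checking functoriality of the skeleton construction under \'etale localization and a careful gluing of the local models.
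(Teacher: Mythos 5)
The paper gives no argument for this theorem beyond the citation of \cite{T} and \cite{B1}, so your sketch has to be measured against the content of those references, and against that standard it has two genuine gaps. First, the formal completion of $\frak{S}'$ along the horizontal divisor $\partial\frak{S}' = \frak{S}'\setminus\frak{S}$ is not a strictly poly-stable formal $\CO_v$-scheme, nor even a special formal scheme in Berkovich's sense: poly-stability refers to local models $\CO_v\{T_0,\dots,T_p\}/(T_0\cdots T_p - a)$ with $|a|<1$, i.e.\ normal crossings in the \emph{special} fiber of a rig-smooth generic fiber, and the completion must be taken along a subscheme of the special fiber (the defining ideal must contain a power of the uniformizer). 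Your divisor is horizontal, so the machinery of \cite{B1}, \S 5 applies to $|S'|$ itself and to tubes of subschemes of the special fiber, while Thuillier's toroidal skeleton is constructed over a trivially valued field; transporting either construction to $Z$ over $E_v$ is exactly the nontrivial content of the theorem, not a formal consequence of quoting those results.

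Second, and more seriously, the restriction step at the end fails as written. Every point of $Z=|S'|\setminus|S|$ lies over a scheme point of $\partial S'$, hence satisfies $|z_i|=0$ for at least one boundary coordinate, whereas the skeleton you describe consists of monomial points with $|z_i|=e^{-a_i}>0$ for all $i$; that skeleton is therefore contained in $|S|$ and is \emph{disjoint} from $Z$, so no retraction of the tube onto it can restrict to a deformation retraction of $Z$ onto $\Sigma$ (which requires $\Sigma\subset Z$). Your two descriptions of the time-one map are also inconsistent: if it sends $x$ to the monomial point with the same values $|z_i|_x$, then points of $Z$ land outside the simplex you defined, and if it lands in that simplex it does not preserve $\{|z_i|=0\}$ (already on the closed disc the point $z=0$ flows through points with $|z|=t>0$). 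What is actually needed is a canonical realization of $\Sigma$ \emph{inside} $Z$ (for instance through the Shilov points of the good-reduction strata $\partial_I=\bigcap_{i\in I}\partial_i$, or through the faces at infinity of an extended skeleton as in Thuillier) together with a homotopy that stays in $Z$. The argument one can genuinely extract from \cite{B1} is the contractibility of each $|\partial_I|$ (good reduction) combined with a nerve argument for the closed covering of $Z$ by the $|\partial_i|$, which gives the homotopy equivalence $Z\simeq\Sigma$ needed for Corollary \ref{inj}; the stronger assertion of a canonical deformation retraction is precisely what Thuillier's toroidal construction is invoked to supply, and your sketch does not yet reconstruct it.
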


This is proved but not stated in \cite{T}, and can also be extracted from \cite{B1}.  A more precise reference
will be provided in the sequel.  

In what follows, $H^{\bullet}_c$ will be one of the cohomology theories (a') $H^{\bullet}_{\ell,c}$, 
(a'') $H^{\bullet}_{p,c}$ ($\ell$-adic or $p$-adic \'etale cohomology, with $\ell \neq p$) or 
(c) $V \mapsto H^{\bullet}_c(V(\CC), \QQ)$
(Betti cohomology with compact support of the complex points of the algebraic variety $V$), considered in Theorem 1.1
of \cite{B2}.   Corresponding to the choice of $H^{\bullet}_c$, the ring $A$ is either (a') $\Ql$, (a'')$\Qp$, or (c) $\QQ$.

\begin{cor}\label{inj}  For $i > 0$, there is a canonical injection
$$  \phi:  H^i(S^{rs} \setminus S,A) = H^i(\coprod_{j = 1}^N \Delta_j\backslash D_P,A) \hookrightarrow H^{i+1}_c(\bar{S}).$$
The image of $\phi$ is the weight zero subspace in cases (a') and (c) and is the space of
smooth vectors for the action of the Galois group (see \cite{B2}, p. 666 for the definition) in case (a'').
\end{cor}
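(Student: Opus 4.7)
The plan is to chain together the isomorphisms already established with Berkovich's comparison theorem. First, combining Proposition~\ref{triang} with Theorem~\ref{defret} gives canonical isomorphisms
$$
H^i(S^{rs}\setminus S,\, A)\;\cong\; H^i(\Sigma,\, A)\;\cong\; H^i(Z,\, A),
$$
the first because $\Sigma$ triangulates $S^{rs}\setminus S$, the second because $Z$ deformation retracts onto $\Sigma$. The preceding lemma then supplies an isomorphism $H^i(Z,A)\isoarrow H^{i+1}_c(|S|,A)$ for $i>0$ (invoking part (b) to rewrite the relative cohomology of the pair $(|S'|,Z)$ as compactly supported cohomology of the open complement $|S|$). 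Splicing these together yields a canonical isomorphism $H^i(S^{rs}\setminus S,A)\cong H^{i+1}_c(|S|,A)$.

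The second step is to apply Theorem~1.1 of \cite{B2}. Under Hypothesis~\ref{gr}, that theorem provides a canonical embedding of $H^{i+1}_c(|S|,A)$ into $H^{i+1}_c(\bar S,A)$ whose image is the weight zero subspace in cases (a') and (c), and the subspace of smooth Galois vectors (in the sense of p.~666 of loc.~cit.) in case (a''). Define $\phi$ as the composition of this embedding with the isomorphism of the first step; the description of the image of $\phi$ is then inherited directly from \cite{B2}.

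The remaining point is canonicity: $\phi$ should not depend on the auxiliary choice of $S'$. Since both the source $H^i(S^{rs}\setminus S,A)$ and the target $H^{i+1}_c(\bar S,A)$ are defined without reference to $S'$, and Berkovich's embedding depends only on $\bar S$, it suffices to verify that the intermediate isomorphisms behave compatibly under refinement of the toroidal data; this follows from naturality of Theorem~\ref{defret} and of the excision/connecting isomorphism in the lemma. The step I expect to require the most care is matching Berkovich's connecting construction with the topological one used to identify $H^i(Z,A)$ with $H^{i+1}_c(|S|,A)$, so that the composite $\phi$ really does land in the weight zero (respectively smooth) part on the nose rather than merely in some subspace containing it.
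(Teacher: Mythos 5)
Your argument is correct and is essentially the paper's own proof, which simply cites Proposition~\ref{triang}, Theorem~\ref{defret} and Theorem~1.1 of \cite{B2}, with the Lemma implicitly supplying the identification $H^i(Z,A)\cong H^{i+1}_c(|S|,A)$ for $i>0$. Your spelled-out splicing, including reading the target of the connecting map in part~(c) as $H^{i+1}_c(|S|,A)$ (correcting the misprint $H^{i+1}_c(|S'|,A)$), is exactly the intended chain.
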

\begin{proof}  This follows directly from \ref{triang}, \ref{defret} and Theorem 1.1 of \cite{B2}.
\end{proof}

The key word is {\it canonical}. This means that the retractions commute with
change of discrete group $\Gamma$ (provided the condition \ref{gr} is preserved) and with Hecke correspondences,
or (more usefully) the action of the group $G(\af)$ in the adelic Shimura varieties.   In particular, the adelic version of
the corollary asserts roughly that the induced representation from $P(\af)$ to $G(\af)$ of the topological cohomology
of the locally symmetric space attached to $L$ injects into the cohomology with compact support
of the adelic Shimura variety $S(G,X)$, with image by either the weight zero subspace or the smooth vectors for
the Galois action.

\section{Some extensions and questions}

\begin{disc}  Hypothesis \ref{rank} is superfluous.  The homotopy type of $|S'| \setminus |S|$ is more complicated
but can be described along lines similar to \ref{defret}.
\end{disc} 

\begin{disc} The article \cite{HLTT} treats more general local coefficients by studying the weight zero cohomology of Kuga
families of abelian varieties over Shimura varieties.  The analytic space of the boundary in this case is a torus bundle with
fiber $(S^1)^d$ over the base $Z$, where $d$ is the relative dimension of the Kuga family over $S$.  The Leray spectral
sequence identifies the cohomology of the total space as the cohomology of $Z$ with coefficients in a sum of local systems
attached to irreducible representations of $L$.  In this way one can recover the Eisenstein classes of \cite{HLTT} for
general coefficients.
\end{disc}

\begin{question}  In \cite{HZ} the combinatorial calculation of the boundary contribution to
coherent cohomology is accompanied by a differential calculation, in which the Dolbeault complex
near the toroidal boundary is compared to the de Rham complex on the incidence complex.
Does this have analogues in other cohomology theories?
\end{question}

\begin{question}  Is there a version of Berkovich's theorem in \cite{B2} for local systems that works directly with $Z$ and
$S$ and avoids the use of Kuga families?   For $\ell$ prime to $p$, local systems over $S$ with coefficients in $\ZZ/\ell^n\ZZ$, attached to algebraic representations of $G$, become trivial when $\Gamma$ is replaced by an appropriate subgroup
of finite index.  This suggests that the analogue of \ref{inj} for $\ell$-adic cohomology with twisted coefficients can be proved
directly on the adelic Shimura variety.  It's not so clear how to handle cases (a'') and (c).
\end{question}

\begin{question}  Does Berkovich's theorem apply to rigid cohomology, which is the theory used in \cite{HLTT}?  
In particular, does it apply to the ordinary locus of the toroidal compactification?
\end{question}

\begin{question}  Most importantly, is there a version of \ref{inj} that keeps track of the torsion cohomology of
$S^{rs} \setminus S$?  The possibility of assigning Galois representation to torsion cohomology classes is
the subject of a series of increasingly precise and increasingly influential conjectures.  Can the methods of \cite{HLTT} be adapted to account for these classes?
\end{question}

\end{document}